\documentclass[letterpaper,10pt,conference]{ieeeconf}  %
\IEEEoverridecommandlockouts       
\usepackage{hyperref}
\usepackage{cite}
\usepackage{amsmath,amssymb,amsfonts}
\usepackage{algorithmic}
\usepackage{graphicx}
\usepackage{textcomp}
\usepackage{xcolor}
\usepackage{soul,dsfont} 
\usepackage{tikz} 
\usepackage{color}
\usepackage{algorithm} 
\usepackage{array}
\usepackage{eqparbox}
\usepackage{url}
\usepackage{relsize}
\usepackage{stfloats}
\usepackage{graphics}
\usepackage[mathscr]{euscript}
\graphicspath{{figures/}{./}}

\newtheorem{proposition}{Proposition}

\newtheorem{definition}{Definition}
\newtheorem{corollary}{Corollary}
\newtheorem{remark}{Remark}

\def\BibTeX{{\rm B\kern-.05em{\sc i\kern-.025em b}\kern-.08em
T\kern-.1667em\lower.7ex\hbox{E}\kern-.125emX}}

\title{\LARGE \bf
Stability and Stabilization of Kronecker-Sum-Coupled \\ Multilinear Systems}

\author{Ziqin He, Ruoying Wang, Shenghan Mei, Yidan Mei, Xin Mao, and Can Chen
\thanks{Ziqin He and Ruoying Wang contributed equally to this work.}
\thanks{Ziqin He is with the Department of Mathematics, University of North Carolina at Chapel Hill, Chapel Hill, NC 27599, USA (emails: zhe21@unc.edu).}
\thanks{Ruoying Wang is with the Department of Mathematics, Purdue University, West Lafayette, IN 47907, USA (email: wang6670@purdue.edu).}
\thanks{Shenghan Mei is with the Department of Mathematics, University of North Carolina at Chapel Hill, Chapel Hill, NC 27599, USA (emails: shmei@unc.edu).}
\thanks{Yidan Mei is with the Department of Statistics and Data Science, Yale University, New Haven, CT 06511, USA (emails: yidan.mei@yale.edu).}
\thanks{Xin Mao is with the School of Data and Information Sciences, University of North Carolina at Chapel Hill, Chapel Hill, NC 27599, USA (emails: xinm@unc.edu).}
\thanks{Can Chen is with the School of Data and Information Sciences, the Department of Mathematics, and the Department of Biostatistics, University of North Carolina at Chapel Hill, Chapel Hill, NC 27599, USA (email: canc@unc.edu).}
}

\begin{document}
\maketitle
\begin{abstract}
Coupled dynamical systems with multi-dimensional state structures arise
in many applications such as spatiotemporal processes and large-scale
interconnected systems. However, existing studies primarily focus on
matrix-based coupled systems or tensor-based dynamical systems without
couplings. The properties of coupled tensor-based systems remain largely
unexplored. In this article, we study  Kronecker-sum-coupled multilinear
systems under the transform-based tensor product framework. We establish
fundamental tensor spectral properties showing that stability can be
characterized through lower-dimensional subsystem dynamics together with
slice-wise matrix stability in the transform domain. We further develop
optimization conditions for state-feedback stabilization and show that
the stabilization problem can be decomposed into independent subsystem
problems. Finally, we analyze the computational complexity of the proposed framework and demonstrate its substantial computational advantages over unfolding-based approaches using both synthetic systems and multi-inertia motor systems.
\end{abstract}
\begin{keywords}
Coupled multilinear systems, Kronecker sum, tensor-tensor product, stability, stabilization.
\end{keywords}

\section{Introduction}

Coupled dynamical systems arise naturally in many areas of science and engineering, including networked systems, multi-agent systems, and spatiotemporal processes \cite{mesbahi2010graph,olfati2007consensus,ren2005consensus}. 
In such systems, the global dynamics are often composed of multiple interacting subsystems whose behaviors evolve along different dimensions. 
A classical mathematical representation of such interactions is the Kronecker-sum structure, where the global system matrix is expressed as the sum of Kronecker products of subsystem operators \cite{vanloan2000ubiquitous,khoromskij2018tensor,simoncini2016computational,benner2008numerical}. 
However, traditional approaches typically rely on vectorized
representations of system states.
When the underlying data are naturally multi-dimensional,
such as spatiotemporal signals, tensor-valued measurements, vectorization destroys the
intrinsic multi-linear structure of the data and leads to severe
computational challenges due to the curse of dimensionality.

In recent years, transform-based tensor frameworks have emerged as powerful tools for modeling and analyzing multidimensional dynamical systems \cite{11430563, mei2025controllability,kilmer2011factorization,kilmer2013third,mao2024datadriven,he2026datadrivencontrol,mei2026transform,ding2025tensor,ma2025multilinear}. 
Among these approaches, the tensor-tensor product framework \cite{Ozdemir2022KernelTDA, Kernfeld2015InvertibleTT,lu2018tensor} provides a natural extension of linear algebraic operations to higher-order tensors. 
This framework enables the definition of tensor inverses, eigenvalue decompositions, and singular value decompositions in a manner analogous to classical matrix algebra \cite{braman2010third,chen2021multilinear,kilmer2013third}. 
These developments have facilitated systematic extensions of classical dynamical systems theory to tensor-valued systems, including stability analysis, controllability, and model reduction \cite{chen2024tensorbook,hoover2021new,mei2026tsvd}. 
Nevertheless, most existing works focus primarily on individual multilinear systems governed by a single tensor operator. 
The analysis of coupled multilinear systems remains largely unexplored.

Motivated by these observations, this work studies multilinear systems governed by Kronecker-sum structured operators under the tensor-tensor product framework. 
Such systems evolve along multiple independent modes while still allowing interactions across subsystems. 
By exploiting the transform-domain decoupling property, we develop a systematic framework for analyzing the stability, and stabilization of these coupled multilinear systems. 
We show that the global system behavior can be characterized through lower-dimensional subsystem dynamics and slice-wise matrix problems in the transform domain. 
Based on this, we derive stability conditions and develop linear matrix inequality (LMI)-based criteria for state-feedback stabilization. 
Finally, we analyze the computational complexity of the proposed framework and demonstrate that exploiting tensor structures yields significant computational advantages compared with traditional unfolding-based approaches.


\section{Preliminaries}\label{sec:prelim}

\subsection{Tensor-Tensor Product}

Tensors extend vectors and matrices to multiway data arrays \cite{kolda2009tensor,chen2021multilinear}. 
In this work, we consider $n$th-order tensors 
$\mathscr{A}\in\mathbb{R}^{m_1\times m_2\times \cdots \times m_n}$, 
and introduce a transform-induced tensor algebra based on the tensor-tensor product. 
Specifically, we apply an invertible linear transform along modes $3,4,\ldots,n$, 
carry out matrix multiplication slice-by-slice in the transform domain, 
then map the result back by the inverse transform. To describe these operations, we employ the mode-$k$ tensor-matrix product. 
For a tensor $\mathscr{A}\in\mathbb{C}^{m_1\times m_2 \times \cdots \times m_n}$
and a matrix $\mathbf{M}\in\mathbb{C}^{p\times m_k}$, the mode-$k$ product 
$\mathscr{A}\times_k \mathbf{M}$ produces a tensor in 
$\mathbb{C}^{m_1\times m_2 \times\cdots\times m_{k-1}\times p\times m_{k+1}\times \cdots\times m_n}$ obtained by multiplying 
all mode-$k$ fibers of $\mathscr{A}$ by $\mathbf{M}$
\begin{equation}
(\mathscr{A}\times_k \mathbf{M})_{i_1 i_2\cdots i_{k-1}j i_{k+1}\cdots i_n}
=
\sum_{i_k=1}^{m_k}
\mathscr{A}_{i_1 i_2\cdots i_k\cdots i_n}
\mathbf{M}_{j i_k}.
\end{equation}

This operation enables linear transforms to be applied independently
along individual modes. To define the tensor-tensor product, we first
introduce an invertible transform along modes $3, 4,\ldots,n$.
Let $\mathbf{M}_k\in\mathbb{C}^{m_k\times m_k}$ be invertible for
$k=3, 4, \ldots,n$. Define the forward and inverse transforms by
\begin{align}
\widehat{\mathscr{A}}
&= \mathcal{G}(\mathscr{A})
= \mathscr{A}\times_3 \mathbf{M}_3
\times_4 \mathbf{M}_4 \times_5\cdots \times_n \mathbf{M}_n,
\label{eq:t-transform} \\
\mathscr{A}
&= \mathcal{G}^{-1}(\widehat{\mathscr{A}})
= \widehat{\mathscr{A}}\times_3 \mathbf{M}_3^{-1}
\times_4 \mathbf{M}_4^{-1}\times_5\cdots\times_n \mathbf{M}_n^{-1}.
\end{align}
Here, $\times_k$ denotes the mode-$k$ tensor-matrix product.

\begin{definition}[Tensor-Tensor Product \cite{ozdemir2022high}]
For $n$th-order tensors $\mathscr{A} \in \mathbb{R}^{m_1 \times \ell \times m_3 \times \dots \times m_n}$ and $\mathscr{B} \in \mathbb{R}^{\ell \times m_2 \times m_3 \times \dots \times m_n}$, let $\widehat{\mathscr{A}}$ and 
$\widehat{\mathscr{B}}$ denote their transformed tensors computed by \eqref{eq:t-transform}. 
The tensor-tensor product of $\mathscr{A}$ and $\mathscr{B}$ is defined as
\begin{equation}\label{eq:tproduct}
\mathscr{A} \star \mathscr{B} = \mathcal{G}^{-1} \! \left( \widehat{\mathscr{A}} \odot \widehat{\mathscr{B}} \right) \in \mathbb{C}^{m_1 \times m_2 \times m_3 \times \dots \times m_n},
\end{equation}
where $\odot$ denotes the slicewise matrix product
\begin{equation*}
(\widehat{\mathscr{A}} \odot \widehat{\mathscr{B}})_{::i_3i_4 \cdots i_n} = 
(\widehat{\mathscr{A}})_{::i_3i_4 \cdots i_n} (\widehat{\mathscr{B}})_{::i_3i_4 \cdots i_n}
\end{equation*}
for $i_k = 1,2, \dots, m_k$. The matrices $(\widehat{\mathscr{A}})_{::i_3 i_4\cdots i_n}$ are called frontal slices.
\end{definition}

For convenience, let $i=(i_3,i_4\ldots,i_n)$ denote a multi-index corresponding
to a transformed frontal slice. We define
$
\mathbf{\widehat A^{(i)}} = (\widehat{\mathscr A})_{::i}.
$ Similarly, $\mathbf{\widehat B^{(i)}},\mathbf{\widehat X^{(i)}},\mathbf{\widehat K^{(i)}}$
denote the corresponding frontal slices of
$\widehat{\mathscr B}$, $\widehat{\mathscr X}$, $\widehat{\mathscr K}$.
Several linear algebraic operations can be extended to $n$th-order tensors under the tensor-tensor product. 
The identity tensor
$\mathscr{I}\in\mathbb{R}^{m\times m\times m_3\times\cdots\times m_n}$ 
is defined so that, in the transform domain, each frontal slice of 
$\widehat{\mathscr{I}}$ is the $m\times m$ identity matrix. The transpose of a $n$th-order tensor 
$\mathscr{A}$, 
denoted by 
$\mathscr{A}^{\top}$, 
is obtained by taking the matrix transpose of every transformed frontal slice and mapping the result back to the original domain. 
A tensor is called symmetric if it coincides with its transpose. The inverse of a square tensor 
$\mathscr{A}\in\mathbb{R}^{m\times m\times m_3\times\cdots\times m_n}$ 
is defined by inverting each transformed frontal slice whenever they are nonsingular. 
It satisfies
$
\mathscr{A}\star \mathscr{A}^{-1}
=
\mathscr{A}^{-1}\star \mathscr{A}
=
\mathscr{I}.$ 
And a tensor is positive definite if every transformed frontal slice is positive definite. 
Importantly, matrix eigenvalue decomposition (EVD) can be generalized to $n$th-order tensors.

\begin{definition}[Tensor Eigenvalue Decomposition]
The tensor eigenvalue decomposition (Tensor-EVD) of a square tensor
$\mathscr{A}\in
\mathbb{C}^{m\times m\times m_3\times\cdots\times m_n}$
is defined as
\begin{equation}\label{eq:tEVD}
\mathscr{A}
=
\mathscr{U}
\star
\mathscr{D}
\star
\mathscr{U}^{-1},
\end{equation}
where $\mathscr D$ is f-diagonal, 
$\mathscr{D}_{jj:\cdots:}\in
\mathbb{C}^{m_3\times\cdots\times m_n}$
are called the eigen-tuples.
\end{definition}

Owing to the slice-wise decoupling induced by the tensor-tensor product,
the Tensor-EVD can be computed by performing matrix eigenvalue
decompositions independently on each transformed frontal slice
\begin{align*}
\mathbf{\widehat A^{(i)}}
=
\mathbf{\widehat U^{(i)}}
\mathbf{\widehat D^{(i)}}
(\mathbf{\widehat U^{(i)}})^{-1},
\quad i_k=1,2,\dots,m_k.
\end{align*}
The factor tensors $\mathscr U$ and $\mathscr D$ are obtained by applying
the inverse transform $\mathcal G^{-1}$ to the assembled slice tensors.



\subsection{Tensor Kronecker Product}

Kronecker structures play a central role in structured multilinear systems, 
especially when analyzing separable dynamics and block-interaction models. 

\begin{definition}[Tensor Kronecker Product \cite{pickard2024kronecker}]
Consider
$\mathscr{A}\in\mathbb{C}^{m_1\times m_2\times\cdots\times m_n}$
and
$\mathscr{B}\in\mathbb{C}^{p_1\times p_2\times\cdots\times p_n}$.
The tensor Kronecker product
$\mathscr{A}\otimes\mathscr{B}
\in\mathbb{C}^{(m_1p_1)\times(m_2p_2)\times\cdots\times(m_np_n)}$
is defined by
\begin{align*}
(\mathscr{A}\otimes\mathscr{B})_{\alpha_1\alpha_2\cdots\alpha_n}
=
\mathscr{A}_{i_1i_2\cdots i_n}\,
\mathscr{B}_{j_1j_2\cdots j_n},
\end{align*}
where $\alpha_k=i_k+(j_k-1)m_k$, $1\le i_k\le m_k$, $1\le j_k\le p_k$, for $k=1,2,\dots,n$.
\end{definition}

Let $\mathbf{M}_k^{(A)}$ and $\mathbf{M}_k^{(B)}$ be invertible transform matrices for $\mathscr{A}$ and $\mathscr{B},$ respectively. Define the Kronecker-lifted transform matrices
$
\mathbf{M}_k = \mathbf{M}_k^{(B)} \otimes \mathbf{M}_k^{(A)} \in \mathbb{C}^{(p_k m_k)\times(p_k m_k)}
$, $k=3,4,\ldots,n$.
We write the forward transform of $\mathscr{A}\otimes\mathscr{B}$ by
$$
\widehat{\mathscr{A}\otimes\mathscr{B}}
=
(\mathscr{A}\otimes\mathscr{B})
\times_3 \mathbf{M}_3
\times_4 \mathbf{M}_4\times_5
\cdots
\times_n \mathbf{M}_n.
$$

\begin{proposition}[Tensor Kronecker  Property]
\label{prop:kron-transform}
Let $\mathscr A$ and $\mathscr B$ be tensors of compatible sizes. Then the transformed
frontal slices of their tensor Kronecker product satisfy
\begin{equation}
\bigl(\widehat{\mathbf A\otimes\mathbf B}\bigr)^{(\boldsymbol{\alpha})}
=
\widehat{\mathbf A}^{(\mathbf i)}
\otimes
\widehat{\mathbf B}^{(\mathbf j)},
\label{eq:kron-transform}
\end{equation}
 where $\boldsymbol{\alpha}$ is the corresponding slice index defined by
$
\alpha_k=i_k+(j_k-1)m_k$ for $k=3,4,\dots,n$.
\end{proposition}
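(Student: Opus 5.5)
The plan is to verify the identity entrywise. The argument rests on one mixed-product fact: a Kronecker-product transform acting on a Kronecker-product index factorizes. Write $\mathscr C=\mathscr A\otimes\mathscr B$. By the definition of the tensor Kronecker product, each index in modes $3,\dots,n$ splits as $\alpha_k=i_k+(j_k-1)m_k$. Under this convention, $\mathscr A$ carries the fast index and $\mathscr B$ the slow one. The lifted matrix $\mathbf M_k=\mathbf M_k^{(B)}\otimes\mathbf M_k^{(A)}$ is ordered the same way, so its entries satisfy
\begin{equation*}
(\mathbf M_k)_{\beta_k\alpha_k}=(\mathbf M_k^{(B)})_{j_k' j_k}\,(\mathbf M_k^{(A)})_{i_k' i_k},\qquad \beta_k=i_k'+(j_k'-1)m_k .
\end{equation*}
The first step is to record this matrix fact, which follows directly from the definition of the matrix Kronecker product.

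Next, expand the forward transform of $\mathscr C$ using the mode-$k$ product formula, applied successively for $k=3,\dots,n$. Fix output indices $(\gamma_1,\gamma_2,\beta_3,\dots,\beta_n)$, with $\gamma_r=a_r+(b_r-1)m_r$ for $r=1,2$. Then
\begin{equation*}
\widehat{\mathscr C}_{\gamma_1\gamma_2\beta_3\cdots\beta_n}=\sum_{\alpha_3,\dots,\alpha_n}\mathscr C_{\gamma_1\gamma_2\alpha_3\cdots\alpha_n}\prod_{k=3}^n(\mathbf M_k)_{\beta_k\alpha_k}.
\end{equation*}
Substitute $\mathscr C_{\gamma\alpha}=\mathscr A_{a_1a_2i_3\cdots i_n}\mathscr B_{b_1b_2j_3\cdots j_n}$ together with the factorization of $(\mathbf M_k)_{\beta_k\alpha_k}$. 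Because $\alpha_k\mapsto(i_k,j_k)$ is a bijection, each sum over $\alpha_k$ becomes a double sum over $(i_k,j_k)$. The summand then separates into a product of a term depending only on the $i$'s and a term depending only on the $j$'s. The sum therefore factors as
\begin{equation*}
\widehat{\mathscr C}_{\gamma_1\gamma_2\beta_3\cdots\beta_n}=\widehat{\mathscr A}_{a_1a_2i_3'\cdots i_n'}\;\widehat{\mathscr B}_{b_1b_2j_3'\cdots j_n'},
\end{equation*}
where $\widehat{\mathscr A}$ and $\widehat{\mathscr B}$ are computed with their own transforms $\mathbf M_k^{(A)}$ and $\mathbf M_k^{(B)}$. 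In tensor language, this step shows $\mathcal G(\mathscr A\otimes\mathscr B)=\mathcal G^{(A)}(\mathscr A)\otimes\mathcal G^{(B)}(\mathscr B)$.

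Finally, fix the slice multi-index $\boldsymbol\beta$ with components $(i_k',j_k')$ and read off the frontal slice. Its $(\gamma_1,\gamma_2)$ entry equals $\widehat{\mathbf A}^{(\mathbf i')}_{a_1a_2}\widehat{\mathbf B}^{(\mathbf j')}_{b_1b_2}$, with $\gamma_r=a_r+(b_r-1)m_r$. This is exactly the $(\gamma_1,\gamma_2)$ entry of the matrix Kronecker product of the two slices, in the index convention matching the definition. Renaming $\boldsymbol\beta$ as $\boldsymbol\alpha$ gives \eqref{eq:kron-transform}.

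The main obstacle is bookkeeping of index orderings rather than any conceptual difficulty. The tensor Kronecker definition puts $\mathscr A$ on the fast index, whereas the standard matrix convention for $\mathbf X\otimes\mathbf Y$ puts $\mathbf Y$ on the fast index. This is why the lifted transform is written $\mathbf M_k^{(B)}\otimes\mathbf M_k^{(A)}$. I will state explicitly that the slice-level $\otimes$ in \eqref{eq:kron-transform} follows the same convention as the tensor Kronecker definition, with the first factor on the fast index. Equivalently, in the standard matrix convention it is $\widehat{\mathbf B}^{(\mathbf j)}\otimes\widehat{\mathbf A}^{(\mathbf i)}$, which is permutation-similar to $\widehat{\mathbf A}^{(\mathbf i)}\otimes\widehat{\mathbf B}^{(\mathbf j)}$. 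Either reading suffices for the spectral uses that follow.
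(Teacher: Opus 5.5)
Your proposal is correct and follows the same route as the paper. Both arguments show that the forward transform with the lifted matrices $\mathbf M_k^{(B)}\otimes\mathbf M_k^{(A)}$ gives $\mathcal G(\mathscr A\otimes\mathscr B)=\mathcal G_A(\mathscr A)\otimes\mathcal G_B(\mathscr B)$ and then read off frontal slices. You add two things the paper leaves implicit: an entrywise verification of the per-mode factorization, which the paper only asserts, and the observation that the slice-level $\otimes$ must be read in the tensor-Kronecker index convention (equivalently $\widehat{\mathbf B}^{(\mathbf j)}\otimes\widehat{\mathbf A}^{(\mathbf i)}$ in standard matrix ordering).
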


\begin{proof}
Let
$\widehat{\mathscr A}=\mathcal G_A(\mathscr A)$ and
$\widehat{\mathscr B}=\mathcal G_B(\mathscr B)$. For the tensor Kronecker product, the transform along mode $k$ is
induced by the Kronecker-lifted matrix
$\mathbf M_k^{(B)}\otimes\mathbf M_k^{(A)}$. Hence,
\[
(\mathscr A\otimes\mathscr B)\times_k
\bigl(\mathbf M_k^{(B)}\otimes\mathbf M_k^{(A)}\bigr)
=
(\mathscr A\times_k\mathbf M_k^{(A)})
\otimes
(\mathscr B\times_k\mathbf M_k^{(B)}).
\]
Applying this identity successively for $k=3,4,\dots,n$ gives
$
\widehat{\mathscr A\otimes\mathscr B}
=
\widehat{\mathscr A}\otimes\widehat{\mathscr B}.
$
Therefore, each transformed frontal slice satisfies
$
\bigl(\widehat{\mathbf A\otimes\mathbf B}\bigr)^{(\boldsymbol{\alpha})}
=
\widehat{\mathbf A}^{(\mathbf i)}
\otimes
\widehat{\mathbf B}^{(\mathbf j)}$. 
\end{proof}

\section{Stability}
We next study the stability of continuous-time multilinear systems
with Kronecker-structured operators. The global operator admits an
additive Kronecker structure that combines lower-dimensional dynamics
across different modes. We consider
multilinear systems governed by a tensor Kronecker-sum operator 
\cite{vanloan2000ubiquitous, benner2008numerical} of the form
\begin{equation}
\mathscr{A}\oplus\mathscr{B}
=
\mathscr{A}\otimes\mathscr{I}
+
\mathscr{I}\otimes\mathscr{B},
\end{equation}
where $\mathscr{I}$ is the identity tensor, $\mathscr{A},\mathscr{B}\in
\mathbb{C}^{m\times m\times m_3\times\cdots\times m_n}$. The resulting continuous-time
dynamics are given by
\begin{equation}\label{eq:cts_tensor_system}
\dot{\mathscr{X}}(t)
=
(\mathscr{A}\oplus\mathscr{B})
\star
\mathscr{X}(t).
\end{equation}
where $\star$ denotes the tensor-tensor product.

\subsection{Stability of Continuous-Time Multilinear Systems}

We consider the continuous-time  multilinear system in the form of \eqref{eq:cts_tensor_system}, where $(\mathscr{A}\oplus\mathscr{B})\in\mathbb{C}^{m^2\times m^2\times m_3^2\times\cdots\times m_n^2}$
.

\begin{definition}[Stability]\label{def:Lstability}
Let $\mathscr{A}\oplus\mathscr{B}$ admit the Tensor-EVD
$
\mathscr{A}\oplus\mathscr{B}
=
\mathscr{U}\star\mathscr{D}\star\mathscr{U}^{-1}.
$
The system \eqref{eq:cts_tensor_system} is said to be
\begin{itemize}
\item stable if all entries of eigen-tuples of $\mathscr{A}\oplus\mathscr{B}$ have non-positive real parts;
\item asymptotically stable if all entries of eigen-tuples have negative real parts;

\item unstable if at least one entry of the eigen-tuples has positive real part.
\end{itemize}
\end{definition}

Since the Tensor-EVD assumes a diagonalizable representation, defective
eigenvalues on the imaginary axis are excluded. The stability of the multilinear system can be analyzed conveniently in the transform domain by reducing the dynamics to a collection of independent matrix systems.

\begin{corollary}[Slice-Wise Stability Characterization]
\label{cor:slicewise_stability} 
The continuous-time multilinear system \eqref{eq:cts_tensor_system} 
is (asymptotically) stable if and only if each transformed frontal slice is a (asymptotically) stable matrix.
\end{corollary}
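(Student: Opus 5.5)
The plan is to unwind Definition~\ref{def:Lstability} through the slice-wise construction of the Tensor-EVD.

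\textbf{Step 1: pass to the transform domain.} Write $\mathscr{C}=\mathscr{A}\oplus\mathscr{B}$ and let $\widehat{\mathscr{C}}=\mathcal G(\mathscr C)$, with frontal slices $\widehat{\mathbf C}^{(i)}$. By the remark after the Tensor-EVD definition, a decomposition $\mathscr C=\mathscr U\star\mathscr D\star\mathscr U^{-1}$ is the same as a collection of matrix EVDs
\[
\widehat{\mathbf C}^{(i)}=\widehat{\mathbf U}^{(i)}\widehat{\mathbf D}^{(i)}(\widehat{\mathbf U}^{(i)})^{-1}
\]
for every multi-index $i$. Here $\widehat{\mathscr D}$ collects the diagonal matrices $\widehat{\mathbf D}^{(i)}$. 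This equivalence holds because $\star$ acts slice-wise after $\mathcal G$, and the inverse, identity and f-diagonality are all defined slice-wise in the transform domain.

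\textbf{Step 2: identify the eigen-tuples.} The eigen-tuple $\mathscr D_{jj:\cdots:}$ is recovered from the transform-domain eigenvalues $(\widehat{\mathbf D}^{(i)})_{jj}$. These are exactly the eigenvalues of $\widehat{\mathbf C}^{(i)}$.

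I will read the ``entries of eigen-tuples'' in Definition~\ref{def:Lstability} as the transform-domain entries $\widehat{\mathscr D}_{jj i}$, which is the convention that makes the slice-wise EVD meaningful. Under this reading, the set of all eigen-tuple entries equals $\bigcup_i \sigma(\widehat{\mathbf C}^{(i)})$.

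\textbf{Step 3: conclude.} The condition that every eigen-tuple entry has negative (resp.\ non-positive) real part becomes the condition that every $\widehat{\mathbf C}^{(i)}$ is Hurwitz (resp.\ has spectrum in the closed left half-plane). In the non-strict case, the diagonalizability standing assumption rules out nontrivial Jordan blocks on the imaginary axis. So in both cases this is precisely (asymptotic) stability of each slice matrix, and both directions of the ``if and only if'' follow at once.

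As a consistency check on the dynamics, applying $\mathcal G$ to \eqref{eq:cts_tensor_system} gives the decoupled systems $\dot{\widehat{\mathbf X}}^{(i)}=\widehat{\mathbf C}^{(i)}\widehat{\mathbf X}^{(i)}$, because $\mathcal G$ is linear and time-invariant. Since $\mathcal G$ is invertible, $\mathscr X(t)\to0$ (or stays bounded) exactly when every $\widehat{\mathbf X}^{(i)}(t)$ does.

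\textbf{Main obstacle: interpreting ``entries of eigen-tuples.''} If that phrase refers to original-domain entries, i.e.\ after applying $\mathcal G^{-1}$, the real parts do not transfer and the statement could fail. I would therefore state explicitly that the eigen-tuples are read in the transform domain, matching how the Tensor-EVD is computed.

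A secondary point is to note that the slices of $\widehat{\mathscr C}$ are taken with respect to the Kronecker-lifted transform. By Proposition~\ref{prop:kron-transform}, these slices are $\widehat{\mathbf A}^{(\mathbf i)}\otimes\mathbf I+\mathbf I\otimes\widehat{\mathbf B}^{(\mathbf j)}$. This identification is not needed for the corollary itself, but it is useful for the subsequent subsystem characterization.
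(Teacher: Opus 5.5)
Your proposal is correct and is essentially the paper's argument. The paper's proof only observes that under $\mathcal G$ the dynamics decouple into independent slice systems $\dot{\widehat{\mathbf X}}^{(i)}=\widehat{\mathbf C}^{(i)}\widehat{\mathbf X}^{(i)}$ and then applies classical matrix stability slice by slice. Your Steps 1--3 with your consistency check do the same thing, routed explicitly through the slice-wise Tensor-EVD. Reading the eigen-tuple entries of Definition~\ref{def:Lstability} in the transform domain is what the paper's one-line proof tacitly assumes, and you are right that this reading is necessary: the original-domain entries are $\mathcal G^{-1}$-mixtures of slice eigenvalues, and their real parts need not share the sign of those eigenvalues.
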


\begin{proof}
Under the transform $\mathcal{G}$, the tensor-tensor product 
decouples into independent matrix systems across transformed 
frontal slices. Stability therefore follows directly from classical 
matrix stability theory applied slice-wise.
\end{proof}

\subsection{Stability of  Kronecker-Sum-Coupled Systems}

We now analyze the stability of multilinear systems generated by
Kronecker-sum operators under the tensor-tensor product framework, in the form of \eqref{eq:cts_tensor_system}. To expose the spectral structure of these systems,
we relate the eigen-tuples of $\mathscr{A}\oplus\mathscr{B}$
to those of the factors.



\begin{proposition}
[Eigen-Tuples of Tensor Kronecker-Sum]
\label{cor:eigtuple-kronsum}
Let $\mathscr{A}$ and $\mathscr{B}$ admit Tensor-EVDs. 
Let $\lambda_r^A(\mathbf i)$ and $\lambda_q^B(\mathbf j)$ denote
the eigenvalues of the transformed slices
$\widehat{\mathbf A}^{(\mathbf i)}$ and
$\widehat{\mathbf B}^{(\mathbf j)}$.
Then the eigenvalues of the corresponding transformed slice
$\boldsymbol{\alpha}$ of $\mathscr A\oplus\mathscr B$ satisfy
\begin{align}
\lambda_{r,q}^{A\oplus B}(\boldsymbol{\alpha})
=
\lambda_r^A(\mathbf i)
+
\lambda_q^B(\mathbf j),
\label{eq:eigtuple-kronsum}
\end{align}
where $\boldsymbol{\alpha}$ is the combined transformed slice index
corresponding to $(\mathbf i,\mathbf j)$ as defined above.
Consequently, the eigen-tuples of $\mathscr A\oplus\mathscr B$
are obtained by collecting these pairwise sums over all transformed
slice indices.
\end{proposition}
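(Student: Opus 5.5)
The approach is to reduce everything to the transform domain via Proposition~\ref{prop:kron-transform}, and then apply the classical spectral theorem for matrix Kronecker sums slice by slice.

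\emph{Transform of the sum.} The forward transform $\mathcal G$, built from the Kronecker-lifted matrices $\mathbf M_k=\mathbf M_k^{(B)}\otimes\mathbf M_k^{(A)}$, is linear. Hence
\[
\widehat{\mathscr A\oplus\mathscr B}=\widehat{\mathscr A\otimes\mathscr I}+\widehat{\mathscr I\otimes\mathscr B}.
\]
Applying Proposition~\ref{prop:kron-transform} to each term gives, for the combined index $\boldsymbol\alpha$ corresponding to $(\mathbf i,\mathbf j)$,
\[
\bigl(\widehat{\mathbf A\oplus\mathbf B}\bigr)^{(\boldsymbol\alpha)}=\widehat{\mathbf A}^{(\mathbf i)}\otimes\widehat{\mathbf I}^{(\mathbf j)}+\widehat{\mathbf I}^{(\mathbf i)}\otimes\widehat{\mathbf B}^{(\mathbf j)} .
\]
By definition every transformed frontal slice of the identity tensor is $\mathbf I_m$. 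So each slice is exactly the matrix Kronecker sum $\widehat{\mathbf A}^{(\mathbf i)}\otimes\mathbf I_m+\mathbf I_m\otimes\widehat{\mathbf B}^{(\mathbf j)}$.

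One bookkeeping point must be checked here. The identity tensor appearing in $\mathscr A\otimes\mathscr I$ should be understood relative to the transform $\mathbf M_k^{(B)}$, and the one in $\mathscr I\otimes\mathscr B$ relative to $\mathbf M_k^{(A)}$. With that reading, its transformed slices are identities in the matching transform domain. I will state this convention explicitly.

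\emph{Slice-wise eigenvalues.} Since $\mathscr A,\mathscr B$ admit Tensor-EVDs, each slice is diagonalizable:
\[
\widehat{\mathbf A}^{(\mathbf i)}=\mathbf U\mathbf D_A\mathbf U^{-1},\qquad \widehat{\mathbf B}^{(\mathbf j)}=\mathbf V\mathbf D_B\mathbf V^{-1}.
\]
Using the mixed-product property,
\[
\widehat{\mathbf A}^{(\mathbf i)}\otimes\mathbf I+\mathbf I\otimes\widehat{\mathbf B}^{(\mathbf j)}=(\mathbf U\otimes\mathbf V)(\mathbf D_A\otimes\mathbf I+\mathbf I\otimes\mathbf D_B)(\mathbf U\otimes\mathbf V)^{-1}.
\]
The middle factor is diagonal with entries $\lambda_r^A(\mathbf i)+\lambda_q^B(\mathbf j)$, which establishes \eqref{eq:eigtuple-kronsum}. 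The same factorization also shows that $\mathscr A\oplus\mathscr B$ admits a Tensor-EVD: assemble $\mathbf U\otimes\mathbf V$ and the diagonal factors over all slices $\boldsymbol\alpha$, then apply $\mathcal G^{-1}$. (The eigenvalue identity would also follow without diagonalizability by Schur triangularization, but this is not needed.)

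\emph{Consequence.} By definition, the eigen-tuples are the diagonal fibers of $\widehat{\mathscr D}$ mapped back through $\mathcal G^{-1}$. The map $\boldsymbol\alpha\leftrightarrow(\mathbf i,\mathbf j)$, with $\alpha_k=i_k+(j_k-1)m_k$, is a bijection onto all slice indices of the lifted tensor. Therefore collecting the pairwise sums over all $(\mathbf i,\mathbf j)$ yields exactly the transformed eigen-tuples, which gives the final claim.

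The main obstacle I expect is the first step: getting the index convention and the Kronecker ordering consistent. The lifted matrix is $\mathbf M^{(B)}\otimes\mathbf M^{(A)}$ while the tensor product is written $\mathscr A\otimes\mathscr B$, and the identity factor's transform must match. I would verify this on a single mode entrywise, then take the rest from Proposition~\ref{prop:kron-transform}.
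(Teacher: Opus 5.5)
Your proposal is correct and follows essentially the same route as the paper. Both use linearity of $\mathcal G$, then Proposition~\ref{prop:kron-transform} to identify each transformed slice $\boldsymbol\alpha$ as the matrix Kronecker sum $\widehat{\mathbf A}^{(\mathbf i)}\otimes\mathbf I+\mathbf I\otimes\widehat{\mathbf B}^{(\mathbf j)}$, and then the classical spectral property of matrix Kronecker sums. Your explicit diagonalization via $\mathbf U\otimes\mathbf V$ (which also shows $\mathscr A\oplus\mathscr B$ admits a Tensor-EVD) and your note on which transform each identity tensor must be defined relative to are details the paper leaves implicit, not a different argument.
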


\begin{proof}
Let $\widehat{\mathscr A}=\mathcal G(\mathscr A)$ and
$\widehat{\mathscr B}=\mathcal G(\mathscr B)$.
By linearity of $\mathcal G$ and the definition of the tensor Kronecker-sum,
\begin{align*}
\widehat{\mathscr A\oplus\mathscr B}
=
\widehat{\mathscr A\otimes\mathscr I}
+
\widehat{\mathscr I\otimes\mathscr B}.
\end{align*}
Using Proposition~\ref{prop:kron-transform}, each transformed frontal
slice satisfies
\[
\bigl(\widehat{\mathbf A\oplus\mathbf B}\bigr)^{(\boldsymbol{\alpha})}
=
\widehat{\mathbf A}^{(\mathbf i)}\otimes \mathbf I
+
\mathbf I\otimes\widehat{\mathbf B}^{(\mathbf j)},
\]
where $\boldsymbol{\alpha}$ is the combined transformed slice index
corresponding to $(\mathbf i,\mathbf j)$, and $\mathbf I$ denotes the
identity matrix of compatible size. The right-hand side is the matrix
Kronecker-sum
$
\widehat{\mathbf A}^{(\mathbf i)}
\oplus
\widehat{\mathbf B}^{(\mathbf j)}.
$
Therefore, by the classical spectral property of the matrix
Kronecker-sum, its eigenvalues are the pairwise sums of the eigenvalues
of $\widehat{\mathbf A}^{(\mathbf i)}$ and
$\widehat{\mathbf B}^{(\mathbf j)}$. Hence,
$
\lambda_{r,q}^{A\oplus B}(\boldsymbol{\alpha})
=
\lambda_r^A(\mathbf i)
+
\lambda_q^B(\mathbf j).
$
Collecting these eigenvalues over all corresponding transformed slice
indices gives the eigen-tuples of $\mathscr A\oplus\mathscr B$.
\end{proof}

\begin{corollary}[Stability Criterion]
\label{cor:spectral-sum}
If both $\mathscr{A}$ and $\mathscr{B}$ generate asymptotically stable
multilinear systems,   the Kronecker-sum-coupled multilinear system generated by
$\mathscr{A}\oplus\mathscr{B}$ is asymptotically stable.
\end{corollary}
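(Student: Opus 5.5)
The plan is to combine the slice-wise characterization of Corollary~\ref{cor:slicewise_stability} with the eigenvalue-sum formula of Proposition~\ref{cor:eigtuple-kronsum}. First, I translate the hypothesis into the transform domain. Since $\mathscr A$ generates an asymptotically stable multilinear system, Corollary~\ref{cor:slicewise_stability} says every transformed frontal slice $\widehat{\mathbf A}^{(\mathbf i)}$ is Hurwitz. Equivalently, $\operatorname{Re}\lambda_r^A(\mathbf i)<0$ for all $r$ and all slice indices $\mathbf i$. The same argument applied to $\mathscr B$ gives $\operatorname{Re}\lambda_q^B(\mathbf j)<0$ for all $q$ and all $\mathbf j$.

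Next, I fix an arbitrary transformed slice index $\boldsymbol\alpha$ of $\mathscr A\oplus\mathscr B$ and write it as the combined index of some pair $(\mathbf i,\mathbf j)$, using the correspondence $\alpha_k=i_k+(j_k-1)m_k$. This map is a bijection between pairs $(\mathbf i,\mathbf j)$ and indices $\boldsymbol\alpha$, so every slice of the Kronecker sum is covered. By Proposition~\ref{cor:eigtuple-kronsum}, each eigenvalue of that slice has the form $\lambda_r^A(\mathbf i)+\lambda_q^B(\mathbf j)$. Taking real parts, which is additive, gives
\[
\operatorname{Re}\bigl(\lambda_r^A(\mathbf i)+\lambda_q^B(\mathbf j)\bigr)=\operatorname{Re}\lambda_r^A(\mathbf i)+\operatorname{Re}\lambda_q^B(\mathbf j)<0 .
\]
Hence every transformed slice of $\mathscr A\oplus\mathscr B$ is Hurwitz. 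Applying Corollary~\ref{cor:slicewise_stability} in the reverse direction, or equivalently Definition~\ref{def:Lstability} directly, since the eigen-tuple entries are exactly these slice eigenvalues, shows that system \eqref{eq:cts_tensor_system} is asymptotically stable.

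The main obstacle is the bookkeeping that justifies the slice identification rather than the inequality itself. Two points need care. First, the transform used for $\mathscr A\oplus\mathscr B$ must be the Kronecker-lifted one, so that Proposition~\ref{prop:kron-transform} applies to both $\mathscr A\otimes\mathscr I$ and $\mathscr I\otimes\mathscr B$. Second, the identity tensor must transform to identity slices under its own transform matrices. I would check this explicitly: $\widehat{\mathscr I}$ has identity frontal slices by definition, so $\widehat{\mathscr A\otimes\mathscr I}$ has slices $\widehat{\mathbf A}^{(\mathbf i)}\otimes\mathbf I$. I would also note that a Tensor-EVD of $\mathscr A\oplus\mathscr B$ exists, because a Kronecker sum of diagonalizable matrices is diagonalizable, with eigenvector matrix $\widehat{\mathbf U}_A^{(\mathbf i)}\otimes\widehat{\mathbf U}_B^{(\mathbf j)}$. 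This makes Definition~\ref{def:Lstability} applicable. Once these points are settled, the strict inequality follows immediately.
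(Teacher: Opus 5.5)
Your proof is correct and takes essentially the same route as the paper. Both arguments use negative real parts of the slice eigenvalues of $\mathscr A$ and $\mathscr B$, the pairwise-sum formula of Proposition~\ref{cor:eigtuple-kronsum}, additivity of real parts, and then Definition~\ref{def:Lstability}. Your extra bookkeeping covers the index bijection, identity slices under the lifted transform, and diagonalizability of each slice via $\widehat{\mathbf U}_A^{(\mathbf i)}\otimes\widehat{\mathbf U}_B^{(\mathbf j)}$, which is what makes the Tensor-EVD required by the definition exist; the paper's three-line proof leaves all of this implicit.
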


\begin{proof}
Asymptotic stability implies that all eigen-tuples of
$\mathscr{A}$ and $\mathscr{B}$ have negative real parts.
And
the eigen-tuple of $\mathscr{A}\oplus\mathscr{B}$ is sum of two such
eigen-tuple elements.
The conclusion follows from Definition~\ref{def:Lstability}.
\end{proof}

\begin{remark}[Computational Complexity]
Denote the number of transformed frontal slices of each of
$\mathscr{A}$ and $\mathscr{B}$ by
$
s=m_3m_4\cdots m_n.
$
The direct unfolding of
$\mathscr{A}\oplus\mathscr{B}$ has dimension
$(m^2s^2)\times(m^2s^2)$, and directly computing its eigenvalues
requires
$
O\bigl((m^2s^2)^3\bigr)
$
operations. In contrast, by Proposition~\ref{cor:eigtuple-kronsum}, the
tensor-based approach first computes the EVD
of the $s$ transformed frontal slices of each of $\mathscr{A}$ and
$\mathscr{B}$,
requiring $O(m^3s)$ operations. The eigenvalues of
$\mathscr{A}\oplus\mathscr{B}$ are then obtained by forming pairwise
sums between the eigenvalues of
$\widehat{\mathbf{A}}^{(\mathbf{i})}$ and
$\widehat{\mathbf{B}}^{(\mathbf{j})}$ over all $s^2$ slice pairs with
$O(m^2s^2)$ operations. Therefore, the overall computational
complexity of the tensor-based approach is
$
O(m^3s+m^2s^2)
$.
\end{remark}

\section{Stabilization by State Feedback}

We next study the stabilization of continuous-time multilinear
systems whose system operator has a Kronecker-sum structure under the
tensor-tensor product framework. Consider the controlled multilinear system
\begin{equation*}
\dot{\mathscr X}(t)
=
(\mathscr A_1\oplus\mathscr A_2)\star\mathscr X(t)
+
(\mathscr B_1\otimes\mathscr I)\star\mathscr U_1(t)
+
(\mathscr I\otimes\mathscr B_2)\star\mathscr U_2(t),
\end{equation*}
where
$
\mathscr A_1\oplus\mathscr A_2
=
\mathscr A_1\otimes\mathscr I
+
\mathscr I\otimes\mathscr A_2.
$
Here, $\mathscr X(t)$ denotes the state tensor, while
$\mathscr U_1(t)$ and $\mathscr U_2(t)$ denote control inputs acting
along the first and second Kronecker factors, respectively.

To preserve the separable structure, we consider the
mode-separable state-feedback laws
$
\mathscr U_1(t)
=
(\mathscr K_1\otimes\mathscr I)\star\mathscr X(t)$, $
\mathscr U_2(t)
=
(\mathscr I\otimes\mathscr K_2)\star\mathscr X(t),$
where $\mathscr K_1$ and $\mathscr K_2$ are designed independently
from the corresponding subsystem stabilization problems.
The objective is to choose these tensors such that the closed-loop multi-linear system is asymptotically stable.

\subsection{State-Feedback Stabilization of Multilinear Systems}

We first recall the general state-feedback stabilization problem for
continuous-time multilinear systems.
\begin{equation}
\dot{\mathscr X}(t)
=
\mathscr A\star\mathscr X(t)
+
\mathscr B\star\mathscr U(t),
\label{eq:general_controlled_tensor_system}
\end{equation}
where
$\mathscr X(t)\in
\mathbb C^{m\times r\times m_3\times\cdots\times m_n}$,
$\mathscr U(t)$ is the control input tensor, and
$\mathscr A,\mathscr B$ are compatible system tensors. Under the state-feedback law
$
\mathscr U(t)=\mathscr K\star\mathscr X(t),
$
the controlled multi-linear system becomes the closed-loop system
\begin{align*}
\dot{\mathscr X}(t)
=
(\mathscr A+\mathscr B\star\mathscr K)\star\mathscr X(t).
\end{align*}
The tensor $\mathscr K$ is referred to as a
state-feedback gain tensor.
The stabilization problem consists of designing
$\mathscr K$ such that the closed-loop multilinear system
is asymptotically stable.

\begin{definition}[Stabilizing State Feedback]
Feedback tensor $\mathscr K$ is called stabilizing if the closed-loop multilinear system is asymptotically stable.
\end{definition}

\begin{proposition}[LMI-Based Stabilization]
\label{prop:lmi_stabilization}
Consider the multilinear system \eqref{eq:general_controlled_tensor_system}.
If there exist tensors $\mathscr Q\succ0$ and $\mathscr Y$ such that
$$
\mathscr A\star\mathscr Q
+
\mathscr Q\star\mathscr A^*
+
\mathscr B\star\mathscr Y
+
\mathscr Y^*\star\mathscr B^*
\prec0,
$$
then the state-feedback gain
$
\mathscr K=\mathscr Y\star\mathscr Q^{-1}
$
asymptotically stabilizes the multi-linear system.
\end{proposition}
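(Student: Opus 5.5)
The plan is to move everything to the transform domain, where the tensor-tensor product becomes a slice-wise matrix product. Then I apply the classical matrix LMI stabilization argument to each transformed frontal slice separately, and reassemble the slices using Corollary~\ref{cor:slicewise_stability}.

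\textbf{Step 1: the tensor LMI is a family of slice LMIs.} Since $\mathcal G$ is linear and maps $\star$ to $\odot$, the transform of the left-hand side of the LMI has $i$th frontal slice
\[
\widehat{\mathbf A}^{(i)}\widehat{\mathbf Q}^{(i)}+\widehat{\mathbf Q}^{(i)}(\widehat{\mathbf A}^{(i)})^{*}+\widehat{\mathbf B}^{(i)}\widehat{\mathbf Y}^{(i)}+(\widehat{\mathbf Y}^{(i)})^{*}(\widehat{\mathbf B}^{(i)})^{*}.
\]
Here I use that the tensor (conjugate) transpose is defined slice-wise in the transform domain, so $\widehat{\mathscr A^*}$ has slices $(\widehat{\mathbf A}^{(i)})^*$. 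By the definition of tensor definiteness, $\mathscr Q\succ0$ means every $\widehat{\mathbf Q}^{(i)}\succ0$, and the tensor inequality $\prec0$ means every slice above is negative definite.

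\textbf{Step 2: the gain and closed loop are slice-wise too.} The tensor inverse is slice-wise, so $\widehat{\mathscr K}$ has slices $\widehat{\mathbf K}^{(i)}=\widehat{\mathbf Y}^{(i)}(\widehat{\mathbf Q}^{(i)})^{-1}$. The closed-loop tensor $\mathscr A+\mathscr B\star\mathscr K$ then has transformed slices $\widehat{\mathbf A}^{(i)}+\widehat{\mathbf B}^{(i)}\widehat{\mathbf K}^{(i)}$.

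\textbf{Step 3: the classical Lyapunov argument on each slice.} Substitute $\widehat{\mathbf Y}^{(i)}=\widehat{\mathbf K}^{(i)}\widehat{\mathbf Q}^{(i)}$ into the slice inequality. This gives
\[
\mathbf A_{cl}^{(i)}\widehat{\mathbf Q}^{(i)}+\widehat{\mathbf Q}^{(i)}(\mathbf A_{cl}^{(i)})^*\prec0 ,
\]
with $\mathbf A_{cl}^{(i)}=\widehat{\mathbf A}^{(i)}+\widehat{\mathbf B}^{(i)}\widehat{\mathbf K}^{(i)}$. Let $v$ be a left eigenvector, $v^*\mathbf A_{cl}^{(i)}=\lambda v^*$. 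Multiplying the inequality by $v^*$ on the left and $v$ on the right gives $2\,\mathrm{Re}(\lambda)\,v^*\widehat{\mathbf Q}^{(i)}v<0$. Since $v^*\widehat{\mathbf Q}^{(i)}v>0$, we get $\mathrm{Re}(\lambda)<0$. Hence every closed-loop slice is Hurwitz. Equivalently, $P=(\widehat{\mathbf Q}^{(i)})^{-1}$ is a Lyapunov certificate for $(\mathbf A_{cl}^{(i)})^*$, which has the same spectrum up to conjugation.

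\textbf{Step 4: reassemble.} Corollary~\ref{cor:slicewise_stability}, applied to the closed-loop tensor, now yields asymptotic stability of the closed-loop multilinear system. So $\mathscr K$ is stabilizing.

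\textbf{Main obstacle.} The delicate point is Step 1, specifically the compatibility of $^*$ and of definiteness with the transform. I will use the paper's convention that transpose, inverse and definiteness are all defined slice-wise in the transform domain. I will interpret $\mathscr A^*$ as the slice-wise conjugate transpose, so the identity is immediate regardless of whether the $\mathbf M_k$ are unitary. If instead $^*$ were meant entrywise in the original domain, I would need to restrict to unitary (or suitably scaled orthogonal) transforms to make $\widehat{\mathscr A^*}$ slice-wise adjoint. I would note that assumption explicitly.
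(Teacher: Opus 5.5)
Your proof is correct, but it takes a different route from the paper's.

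\textbf{The paper's route.} It stays at the tensor level. It invokes a tensor Lyapunov criterion: existence of $\mathscr P=\mathscr P^*\succ0$ with $\mathscr A_{\mathrm{cl}}^*\star\mathscr P+\mathscr P\star\mathscr A_{\mathrm{cl}}\prec0$. It then sets $\mathscr Q=\mathscr P^{-1}$, applies the congruence with $\mathscr Q$, and substitutes $\mathscr Y=\mathscr K\star\mathscr Q$ to reach the LMI. Sufficiency is obtained by reading that chain backwards.

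\textbf{Your route.} You push everything through $\mathcal G$ at the outset and make the same substitution $\widehat{\mathbf Y}^{(i)}=\widehat{\mathbf K}^{(i)}\widehat{\mathbf Q}^{(i)}$ slice by slice. You then prove that each $\mathbf A_{cl}^{(i)}$ is Hurwitz directly by the left-eigenvector argument, and reassemble via Corollary~\ref{cor:slicewise_stability}.

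\textbf{What each buys.} The paper's version is shorter and shows where the LMI comes from: the change of variables is reversible, so the LMI is exactly the Lyapunov condition in new coordinates. However, its tensor Lyapunov criterion is asserted rather than proved, and it silently relies on the slice-wise decoupling that you make explicit. Your argument needs no Lyapunov theorem, no $\mathscr P$, and no congruence step. It also runs in the direction actually claimed, and it isolates the conventions the statement depends on: $^*$, inverse and definiteness all defined slice-wise in the transform domain.

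\textbf{Two small slips in your asides.} Neither affects the main argument.
\begin{enumerate}
\item The slice inequality $\mathbf A_{cl}^{(i)}\widehat{\mathbf Q}^{(i)}+\widehat{\mathbf Q}^{(i)}(\mathbf A_{cl}^{(i)})^*\prec0$ makes $\widehat{\mathbf Q}^{(i)}$ itself the Lyapunov certificate for $(\mathbf A_{cl}^{(i)})^*$. It is $(\widehat{\mathbf Q}^{(i)})^{-1}$, after congruence, that certifies $\mathbf A_{cl}^{(i)}$. You stated it the other way round.
\item In your hedge about reading $^*$ entrywise in the original domain, the relevant condition is not unitarity. Conjugate-transposing each original frontal slice gives $\overline{\mathscr A}^{\top}\times_k\mathbf M_k$ in the transform domain. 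The slice-wise adjoint of $\widehat{\mathscr A}$ is $\overline{\mathscr A}^{\top}\times_k\overline{\mathbf M_k}$. These agree when the transforms are real. The unitary DFT is a counterexample to your condition, which is why the t-product transpose also reverses slices.
\end{enumerate}
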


\begin{proof}
Under the state-feedback law
$
\mathscr U=\mathscr K\star\mathscr X,
$
the closed-loop system becomes
$
\dot{\mathscr X}
=
(\mathscr A+\mathscr B\star\mathscr K)\star\mathscr X.
$
Let
$
\mathscr A_{\rm cl}=\mathscr A+\mathscr B\star\mathscr K.
$
By the Lyapunov criterion, the closed-loop system is asymptotically
stable if there exists a tensor
$\mathscr P=\mathscr P^*\succ0$ such that
$
\mathscr A_{\rm cl}^*\star\mathscr P
+
\mathscr P\star\mathscr A_{\rm cl}
\prec0.
$
Let
$
\mathscr Q=\mathscr P^{-1}.
$
Since $\mathscr P\succ0$, we also have $\mathscr Q\succ0$.
Left- and right-multiplying the above inequality by $\mathscr Q$ gives
$
\mathscr Q\star\mathscr A_{\rm cl}^*
+
\mathscr A_{\rm cl}\star\mathscr Q
\prec0.
$
Substituting
$
\mathscr A_{\rm cl}=\mathscr A+\mathscr B\star\mathscr K
$
and defining
$
\mathscr Y=\mathscr K\star\mathscr Q,
$
the inequality becomes
$
\mathscr A\star\mathscr Q
+
\mathscr Q\star\mathscr A^*
+
\mathscr B\star\mathscr Y
+
\mathscr Y^*\star\mathscr B^*
\prec0.
$
Therefore, if there exist tensors $\mathscr Q\succ0$ and $\mathscr Y$
satisfying the above LMI, then defining
$
\mathscr K=\mathscr Y\star\mathscr Q^{-1}
$
yields a feedback gain for which the closed-loop system satisfies the
strict Lyapunov inequality and is therefore asymptotically stable.
\end{proof}

The state-feedback design problem can be analyzed conveniently in the transform domain.
Let
$
\widehat{\mathscr A}=\mathcal G(\mathscr A)$, $
\widehat{\mathscr B}=\mathcal G(\mathscr B)$, $
\widehat{\mathscr K}=\mathcal G(\mathscr K).
$
Since the tensor-tensor product decouples across transformed frontal slices, the closed-loop multilinear system is equivalent to a collection of independent matrix systems.

\begin{corollary}[Slice-Wise Stabilization Characterization]
\label{cor:slicewise_stabilization}
A feedback tensor $\mathscr K$ is stabilizing for the multilinear system if and only if,
for every transformed frontal slice, the matrix gain $(\widehat{\mathbf K})^{(\mathbf{i})}$ stabilizes the corresponding slice system.
\end{corollary}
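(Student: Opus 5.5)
The argument combines the transform-domain form of the closed-loop operator with Corollary~\ref{cor:slicewise_stability}. First we compute the transform of the closed-loop tensor $\mathscr A_{\rm cl}=\mathscr A+\mathscr B\star\mathscr K$. The transform $\mathcal G$ is a composition of mode-$k$ products with fixed invertible matrices, so it is linear: $\mathcal G(\mathscr A+\mathscr C)=\widehat{\mathscr A}+\widehat{\mathscr C}$. By the definition \eqref{eq:tproduct} of the tensor-tensor product, $\mathcal G(\mathscr B\star\mathscr K)=\widehat{\mathscr B}\odot\widehat{\mathscr K}$. Reading this off slice by slice gives, for every multi-index $\mathbf i=(i_3,\dots,i_n)$,
\[
\widehat{\mathbf A}_{\rm cl}^{(\mathbf i)}=\widehat{\mathbf A}^{(\mathbf i)}+\widehat{\mathbf B}^{(\mathbf i)}\widehat{\mathbf K}^{(\mathbf i)} .
\]
This is exactly the closed-loop matrix of the slice system $\dot{\widehat{\mathbf x}}=\widehat{\mathbf A}^{(\mathbf i)}\widehat{\mathbf x}+\widehat{\mathbf B}^{(\mathbf i)}\widehat{\mathbf u}$ under the gain $\widehat{\mathbf K}^{(\mathbf i)}$.

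Next we use the definition of a stabilizing feedback: $\mathscr K$ is stabilizing exactly when the closed-loop system $\dot{\mathscr X}=\mathscr A_{\rm cl}\star\mathscr X$ is asymptotically stable. By Corollary~\ref{cor:slicewise_stability}, this holds if and only if every transformed frontal slice $\widehat{\mathbf A}_{\rm cl}^{(\mathbf i)}$ is a Hurwitz matrix. By the identity above, that is the same as saying each $\widehat{\mathbf K}^{(\mathbf i)}$ stabilizes slice system $\mathbf i$. Both directions follow at once, since each step is an equivalence.

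One can also see the dynamics directly. Applying $\mathcal G$ to both sides of the closed-loop equation commutes with $d/dt$, because $\mathcal G$ acts only on the indices $3,\dots,n$. This gives the decoupled equations $\dot{\widehat{\mathbf X}}^{(\mathbf i)}=\widehat{\mathbf A}_{\rm cl}^{(\mathbf i)}\widehat{\mathbf X}^{(\mathbf i)}$. Because $\mathcal G$ is invertible, $\mathscr X(t)\to0$ if and only if every slice $\widehat{\mathbf X}^{(\mathbf i)}(t)\to0$.

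The main obstacle is making sure the Tensor-EVD-based Definition~\ref{def:Lstability} is consistent with this slice-wise reading. The eigen-tuples of $\mathscr A_{\rm cl}$ are, by construction, the slice eigenvalues assembled over $\mathbf i$. So "all entries of the eigen-tuples have negative real part" means the same as "all eigenvalues of every $\widehat{\mathbf A}_{\rm cl}^{(\mathbf i)}$ have negative real part". The diagonalizability caveat does not matter for asymptotic stability, because negative real parts give decay regardless of Jordan structure. Taking Corollary~\ref{cor:slicewise_stability} as given resolves this point, so the proof reduces to the slice identity.
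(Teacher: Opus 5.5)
Your proposal is correct and takes essentially the same route as the paper: apply $\mathcal G$, use its linearity and the slice-wise product to obtain the closed-loop slices $\widehat{\mathbf A}^{(\mathbf i)}+\widehat{\mathbf B}^{(\mathbf i)}\widehat{\mathbf K}^{(\mathbf i)}$, and conclude from slice-wise matrix stability. The only difference is that you cite Corollary~\ref{cor:slicewise_stability} explicitly for the final step and check consistency with Definition~\ref{def:Lstability}, whereas the paper appeals directly to classical matrix stability of the decoupled slice systems.
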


\begin{proof}
Applying the transform $\mathcal G$ to the closed-loop dynamics yields
$
\widehat{\dot{\mathscr X}}
=
(\widehat{\mathscr A+\mathscr B\star\mathscr K} )\star\mathscr X.
$
By linearity of $\mathcal G$ and the definition of the tensor-tensor product,
multiplication in the transform domain becomes slice-wise matrix multiplication.
Consequently, the multilinear system decouples across transformed
frontal slices. For each slice index
$i=(i_3,i_4,\dots,i_n)$, the dynamics reduce to the matrix system
\begin{align*}
\dot{\mathbf{X}}^{(\mathbf{i})}
=
\Big((\widehat{\mathbf A})^{(\mathbf{i})}
+
(\widehat{\mathbf B})^{(\mathbf{i})}(\widehat{\mathbf K})^{(\mathbf{i})}\Big)\mathbf{X}^{(\mathbf{i})}.
\end{align*}
Thus, the closed-loop system is asymptotically stable
if and only if every slice system is asymptotically stable.
\end{proof}

\subsection{Stabilization of  Kronecker-Sum-Coupled Systems}

We consider multilinear systems whose system operator
has a Kronecker-sum structure. The controlled multilinear system is
\begin{align*}
\dot{\mathscr X}(t)
=
&(\mathscr A_1\oplus\mathscr A_2)\star\mathscr X(t)
+
(\mathscr B_1\otimes\mathscr I)\star\mathscr U_1(t)\\
&+
(\mathscr I\otimes\mathscr B_2)\star\mathscr U_2(t),
\end{align*}
where
$
\mathscr A_1\oplus\mathscr A_2
=
\mathscr A_1\otimes\mathscr I
+
\mathscr I\otimes\mathscr A_2.
$ We solve the stabilization problems for the two
constituent subsystems independently. These subsystem feedback operators are then combined
in a mode-separable form that preserves the Kronecker-sum structure of
the closed-loop multilinear system.

\begin{proposition}[Mode-Separable Stabilization]
\label{cor:kronsum_stabilization}
Suppose the feedback tensors $\mathscr K_1$ and $\mathscr K_2$ are
obtained independently by solving the stabilization problems for the
two constituent subsystems, such that
$\mathscr A_1+\mathscr B_1\star\mathscr K_1$
and
$\mathscr A_2+\mathscr B_2\star\mathscr K_2$
are asymptotically stable. Consider the mode-separable state-feedback laws
$
\mathscr U_1(t)
=
(\mathscr K_1\otimes\mathscr I)\star\mathscr X(t)$, $
\mathscr U_2(t)
=
(\mathscr I\otimes\mathscr K_2)\star\mathscr X(t).
$
Then the resulting closed-loop multi-linear system  
\begin{align*}
\dot{\mathscr X}(t)
=
\Big(
(\mathscr A_1+\mathscr B_1\star\mathscr K_1)
\oplus
(\mathscr A_2+\mathscr B_2\star\mathscr K_2)
\Big)\star\mathscr X(t).
\end{align*}
is  asymptotically stable.
\end{proposition}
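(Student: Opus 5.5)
The proof has two parts. First we verify the algebraic identity that the closed-loop operator has the displayed Kronecker-sum form. Then we invoke Corollary~\ref{cor:spectral-sum}.

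For the first part, substitute the feedback laws into the controlled dynamics. The input terms become $(\mathscr B_1\otimes\mathscr I)\star(\mathscr K_1\otimes\mathscr I)\star\mathscr X$ and $(\mathscr I\otimes\mathscr B_2)\star(\mathscr I\otimes\mathscr K_2)\star\mathscr X$. The key tool is a mixed-product rule for the tensor Kronecker product under $\star$:
\[
(\mathscr A\otimes\mathscr B)\star(\mathscr C\otimes\mathscr D)=(\mathscr A\star\mathscr C)\otimes(\mathscr B\star\mathscr D).
\]
This rule is not stated in the paper, so we prove it in the transform domain. By Proposition~\ref{prop:kron-transform}, with the Kronecker-lifted transforms, the slice $\boldsymbol\alpha\leftrightarrow(\mathbf i,\mathbf j)$ of the left-hand side is
\[
\bigl(\widehat{\mathbf A}^{(\mathbf i)}\otimes\widehat{\mathbf B}^{(\mathbf j)}\bigr)\bigl(\widehat{\mathbf C}^{(\mathbf i)}\otimes\widehat{\mathbf D}^{(\mathbf j)}\bigr).
\]
The classical matrix mixed-product rule turns this into $\widehat{\mathbf A}^{(\mathbf i)}\widehat{\mathbf C}^{(\mathbf i)}\otimes\widehat{\mathbf B}^{(\mathbf j)}\widehat{\mathbf D}^{(\mathbf j)}$. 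This is exactly slice $\boldsymbol\alpha$ of $\widehat{(\mathscr A\star\mathscr C)\otimes(\mathscr B\star\mathscr D)}$, again by Proposition~\ref{prop:kron-transform}. Since $\mathcal G$ is invertible, the identity follows. Applying it, and using $\mathscr I\star\mathscr I=\mathscr I$, the input terms become $(\mathscr B_1\star\mathscr K_1)\otimes\mathscr I$ and $\mathscr I\otimes(\mathscr B_2\star\mathscr K_2)$. Bilinearity of $\otimes$ then gives
\[
\mathscr A_1\otimes\mathscr I+(\mathscr B_1\star\mathscr K_1)\otimes\mathscr I=(\mathscr A_1+\mathscr B_1\star\mathscr K_1)\otimes\mathscr I,
\]
and similarly for the second factor. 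The total operator is therefore $(\mathscr A_1+\mathscr B_1\star\mathscr K_1)\oplus(\mathscr A_2+\mathscr B_2\star\mathscr K_2)$.

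For the second part, set $\mathscr F_1=\mathscr A_1+\mathscr B_1\star\mathscr K_1$ and $\mathscr F_2=\mathscr A_2+\mathscr B_2\star\mathscr K_2$. Both generate asymptotically stable systems by hypothesis, so Corollary~\ref{cor:spectral-sum} applies to $\mathscr F_1\oplus\mathscr F_2$. If the Tensor-EVD assumption is a concern, we can argue slice-wise instead. By Proposition~\ref{cor:eigtuple-kronsum}, or directly from the slice formula above, each transformed slice of $\mathscr F_1\oplus\mathscr F_2$ is the matrix Kronecker sum $\widehat{\mathbf F}_1^{(\mathbf i)}\oplus\widehat{\mathbf F}_2^{(\mathbf j)}$. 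Its eigenvalues are $\lambda+\mu$ with $\operatorname{Re}\lambda<0$ and $\operatorname{Re}\mu<0$, so every slice is Hurwitz. Asymptotic stability then follows from Corollary~\ref{cor:slicewise_stability}; note that Hurwitz stability needs no diagonalizability.

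The main obstacle is the bookkeeping in the mixed-product step. One must confirm that the transforms used for $\mathscr B_1\otimes\mathscr I$, $\mathscr K_1\otimes\mathscr I$, and $\mathscr X$ are the same lifted transform $\mathbf M_k^{(2)}\otimes\mathbf M_k^{(1)}$. One must also confirm that the identity tensor has identity slices under each factor transform, so that the slice index pairing $(\mathbf i,\mathbf j)$ is consistent across all factors. We would state this as a standing convention (common factor transforms $\mathbf M_k^{(1)},\mathbf M_k^{(2)}$) at the start of the proof.
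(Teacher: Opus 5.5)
Your proposal is correct and follows essentially the same route as the paper. Both arguments substitute the feedback laws, use the mixed-product property and bilinearity to rewrite the closed loop as $(\mathscr A_1+\mathscr B_1\star\mathscr K_1)\oplus(\mathscr A_2+\mathscr B_2\star\mathscr K_2)$, and conclude from the eigenvalue-sum property of the Kronecker sum (Proposition~\ref{cor:eigtuple-kronsum}). Where you go beyond the paper, you fill in steps it leaves out: you prove the mixed-product rule in the transform domain (the paper only asserts it), you make the common factor-transform convention explicit, and you give a slice-wise Hurwitz argument that avoids the diagonalizability assumption.
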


\begin{proof}
Substituting
$
\mathscr U_1(t)
=
(\mathscr K_1\otimes\mathscr I)\star\mathscr X(t)
$
and
$
\mathscr U_2(t)
=
(\mathscr I\otimes\mathscr K_2)\star\mathscr X(t)
$
into the controlled multilinear system and using associativity of the
tensor-tensor product gives
\begin{align*}
\dot{\mathscr X}(t)
=
&\Big(
\mathscr A_1\oplus\mathscr A_2
+
(\mathscr B_1\otimes\mathscr I)
\star(\mathscr K_1\otimes\mathscr I)\\
&+
(\mathscr I\otimes\mathscr B_2)
\star(\mathscr I\otimes\mathscr K_2)
\Big)\star\mathscr X(t).
\end{align*}
By the mixed-product property,
$
(\mathscr B_1\otimes\mathscr I)
\star(\mathscr K_1\otimes\mathscr I)
=
(\mathscr B_1\star\mathscr K_1)\otimes\mathscr I
$
and
$
(\mathscr I\otimes\mathscr B_2)
\star(\mathscr I\otimes\mathscr K_2)
=
\mathscr I\otimes(\mathscr B_2\star\mathscr K_2).
$
Hence, by the definition of the  Kronecker-sum,
\begin{align*}
&\mathscr A_1\oplus\mathscr A_2
+
(\mathscr B_1\star\mathscr K_1)\otimes\mathscr I
+
\mathscr I\otimes(\mathscr B_2\star\mathscr K_2)
\\
&=
(\mathscr A_1+\mathscr B_1\star\mathscr K_1)\otimes\mathscr I
+
\mathscr I\otimes
(\mathscr A_2+\mathscr B_2\star\mathscr K_2)
\\
&=
(\mathscr A_1+\mathscr B_1\star\mathscr K_1)
\oplus
(\mathscr A_2+\mathscr B_2\star\mathscr K_2).
\end{align*}
Therefore, the closed-loop multilinear system is
\begin{align*}
\dot{\mathscr X}(t)
=
\Big(
(\mathscr A_1+\mathscr B_1\star\mathscr K_1)
\oplus
(\mathscr A_2+\mathscr B_2\star\mathscr K_2)
\Big)\star\mathscr X(t).
\end{align*}
By assumption, the two constituent closed-loop operators
$\mathscr A_1+\mathscr B_1\star\mathscr K_1$ and
$\mathscr A_2+\mathscr B_2\star\mathscr K_2$
are asymptotically stable. 
By Proposition~\ref{cor:eigtuple-kronsum},
the eigen-tuples of their Kronecker-sum are obtained from sums of
the eigen-tuple elements of the two constituent closed-loop
operators. Since all such elements have negative real parts,
their sums also have negative real parts. Therefore, the
resulting closed-loop multilinear system is asymptotically stable.
\end{proof}

\begin{remark}[Computational Complexity]
Denote the number of transformed frontal slices of each constituent
system by $s=m_3m_4\cdots m_d$. Under the full tensor Kronecker
product, the Kronecker-sum-coupled system has $s^2$ transformed
frontal slices, each of size $m^2\times m^2$. Hence, direct
vectorization yields a system of dimension $N=m^2s^2$, and solving
the corresponding LMI requires $O((m^2s^2)^6)$ operations.
In contrast, Proposition~\ref{cor:kronsum_stabilization} decomposes
the stabilization problem into $2s$ independent LMIs of size
$m\times m$. Thus, the tensor-based approach has complexity
$O(sm^6)$, leading to substantial computational savings.
\end{remark}
\section{Numerical Examples}\label{sec:num}

In this section, we illustrate the computational advantages of the proposed tensor-based framework for Kronecker-sum multilinear systems. 
All experiments were conducted in MATLAB on a machine equipped with an M1 Pro CPU and 16GB of memory. The code is available at
\url{https://github.com/ZiqinHe/Tensor_Kronekcer-sum}.

\subsection{Stability}

We compare the computational time of the proposed tensor method with the
direct unfolding method for the stability analysis of Kronecker-sum-coupled
multi-linear systems. In the first experiment, we fix the number of
transformed slices to $R=4$ and vary the matrix dimension $m$.
The tensor method computes the eigenvalues of
$\widehat{\mathbf A}_1^{(i)}$ and $\widehat{\mathbf A}_2^{(\ell)}$,
$i,\ell=1,2,\dots,R$, separately and obtains the eigenvalues of the
$R^2$ Kronecker-sum slices by pairwise sums, while the unfolding method
directly analyzes the full matrix representation.
As shown in Fig.~\ref{fig:complexity}a, the tensor method scales much
more favorably with $m$. In the second experiment, we fix $m$ and vary the number of transformed
slices $R$. As shown in Fig.~\ref{fig:complexity}b, the tensor method
also exhibits substantially better scalability with respect to $R$.
\begin{figure}[t]
\centering
\includegraphics[width=0.49\textwidth]{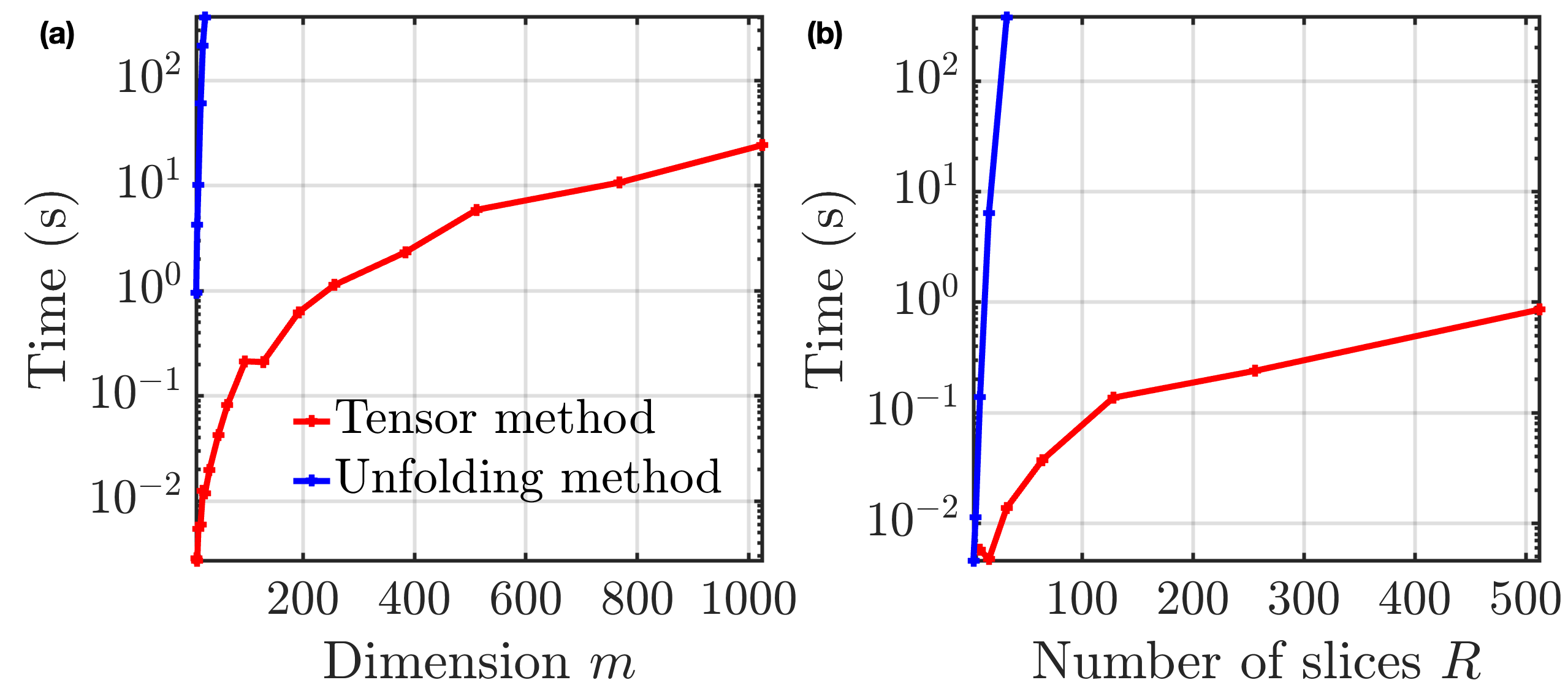}
\vspace{-15pt}
\caption{Computational time comparison between the tensor method and the direct unfolding method for the stability analysis of tensor Kronecker-sum systems. 
(a) Runtime versus the matrix dimension with a fixed number of transformed slices. 
(b) Runtime versus the number of transformed slices. }
\label{fig:complexity}
\end{figure}

\subsection{Stabilization}

\begin{figure}[t]
\centering
\includegraphics[width=0.49\textwidth]{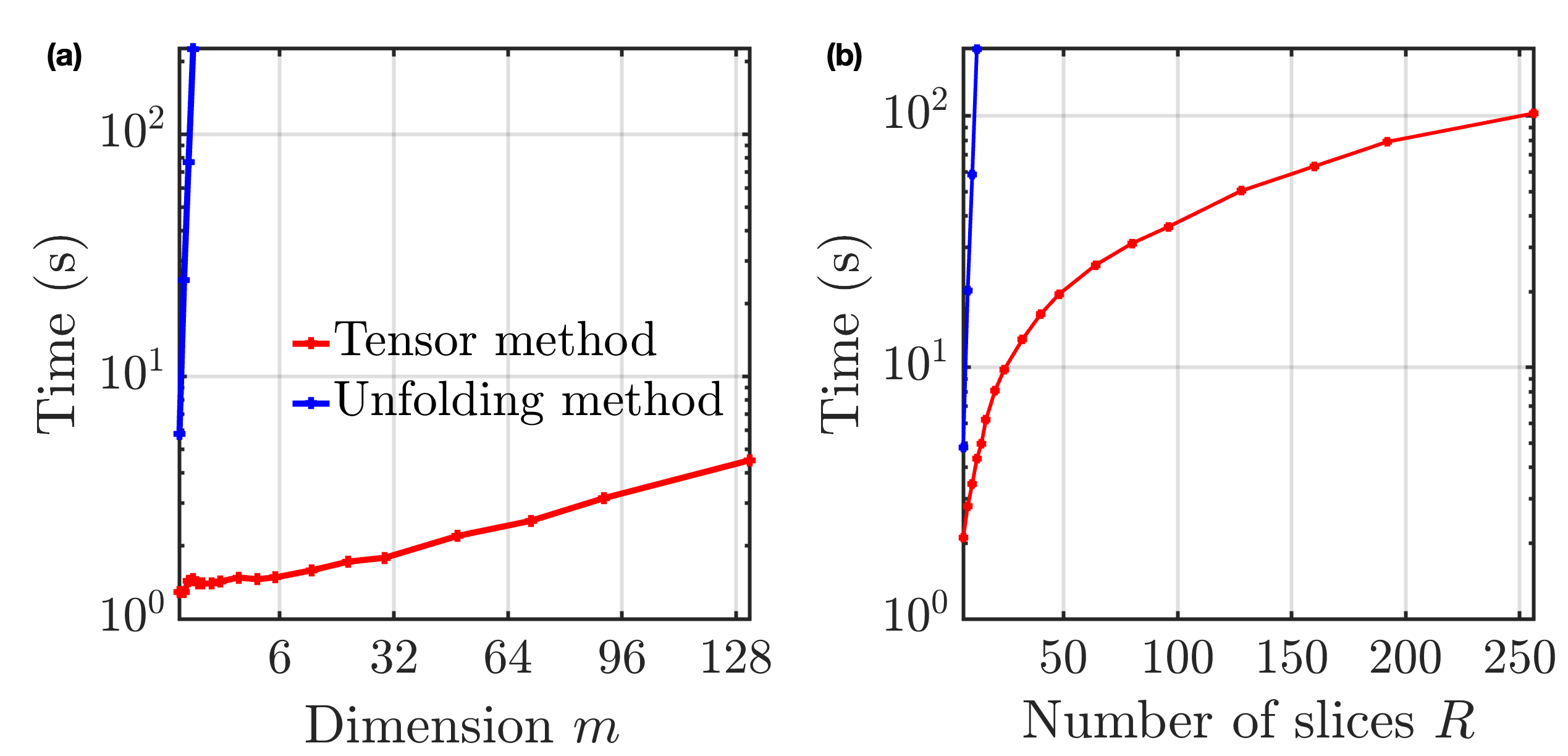}
\vspace{-15pt}
\caption{Computational time comparison for stabilization of tensor Kronecker-sum systems using tensor method and unfolding method. 
(a) Runtime versus the state dimension $m$ with a fixed number of slices $R=8$. 
(b) Runtime versus the number of transformed slices with fixed matrix dimension $m=3$.}
\label{fig:stabilization}
\vspace{-10pt}
\end{figure}

We compare the computational time of the proposed tensor-based stabilization
method with the direct unfolding method for Kronecker-sum-coupled multi-linear
systems. In the first experiment, we fix the number of transformed slices to
$R=2$ and vary the matrix dimension $m$. The tensor method exploits the
separable Kronecker-sum structure and solves the constituent stabilization
problems independently, whereas the unfolding method constructs a single LMI
of size $R^2m^2$. As shown in Fig.~\ref{fig:stabilization}a, the unfolding
method becomes rapidly more expensive as $m$ increases due to the high
computational complexity of semidefinite programming and its large memory
requirements, while the tensor method scales much more favorably. In the second experiment, we fix $m$ and vary the number of transformed
slices $R$. As shown in Fig.~\ref{fig:stabilization}b, the increasing LMI
size causes the computational cost of the unfolding method to grow rapidly
with $R$, whereas the tensor-based method remains feasible for substantially
larger problems.

\subsection{Tensor Multi-Inertia Motor Systems}
\label{subsec:motor-stability}

Multi-inertia motor-drive systems are commonly used to model flexible
electromechanical systems, including servo drives, robotic joints,
multi-axis motion systems, and mechanical transmission systems. In such
systems, multiple inertias are connected through flexible shafts,
torsional springs, and dampers, making stability and stabilization
important considerations in system analysis and controller design. We therefore consider a  Kronecker-sum
model constructed from two multi-inertia motor dynamics as an
engineering example of our framework.

Let $\mathscr{E}_i$ and $\mathscr{A}_i$, $i=1,2$, denote the tensor
descriptor and system operators.
Each motor factor contains $q$ coupled inertias and has subsystem
dimension $m=2q$, corresponding to the angular positions and angular
velocities. The constituent tensor operators are constructed in the form
\begin{equation*}
\mathscr{E}_i
=
\begin{bmatrix}
\mathscr{I}_q & \mathscr{O}\\
\mathscr{O} & \mathscr{M}_i
\end{bmatrix},
\
\mathscr{A}_i
=
\begin{bmatrix}
\mathscr{O} & \mathscr{I}_q\\
-\mathscr{K}_i & -\mathscr{D}_i
\end{bmatrix},
\qquad i=1,2,
\label{eq:tensor-motor-factors}
\end{equation*}
where $\mathscr{M}_i$, $\mathscr{K}_i$, and $\mathscr{D}_i$
represent the tensor-valued inertia, stiffness, and damping parameters,
respectively. Neighboring inertias are coupled through torsional springs
and dampers, while variations along the tensor modes represent different
operating conditions or parameter configurations.
Using these two motor factors, we construct the original tensor
Kronecker-sum descriptor system
$$
\mathscr{E}_{\otimes_t} \star \dot{\mathscr{X}}(t)
=
\mathscr{A}_{\oplus} \star \mathscr{X}(t),$$
where $
\mathscr{E}_{\otimes_t}
=
\mathscr{E}_1\otimes\mathscr{E}_2, 
\mathscr{A}_{\oplus}
=
\mathscr{A}_1\otimes\mathscr{E}_2
+
\mathscr{E}_1\otimes\mathscr{A}_2.$

\begin{figure}[t]
    \centering
    \includegraphics[width=0.49\textwidth]{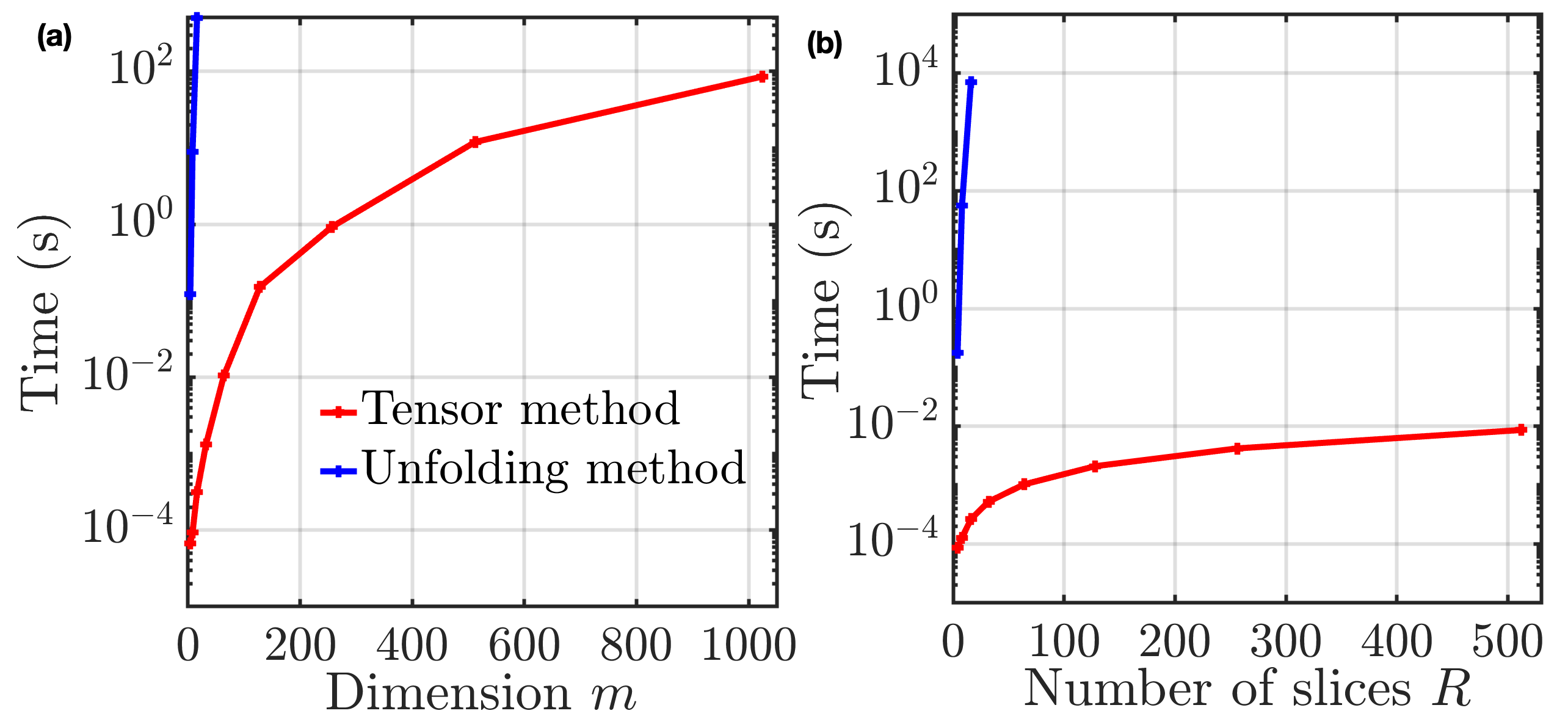}
    \vspace{-15pt}
    \caption{Computational time for stability analysis of the tensor
    multi-inertia motor Kronecker-sum system.
    (a) Runtime versus the subsystem dimension.
    (b) Runtime versus the number of transformed slices.}
    \label{fig:motor-stability}
    \vspace{-10pt}
\end{figure}

\paragraph{Stability}
We compare the tensor-tensor and unfolding methods by varying the subsystem
dimension $m$ with a fixed number of transformed slices and, conversely,
varying the number of slices $R$ with a fixed subsystem dimension.
The two approaches produce spectra that agree to numerical precision.
As shown in Fig.~\ref{fig:motor-stability}, the computational cost of the
unfolding method increases rapidly with both $m$ and $R$ due to the rapidly
growing dimension of the unfolded system, whereas the tensor-tensor method
exploits the Kronecker-sum structure and remains computationally efficient
for substantially larger systems.

\begin{figure}[t]
    \centering
    \includegraphics[width=1\linewidth]{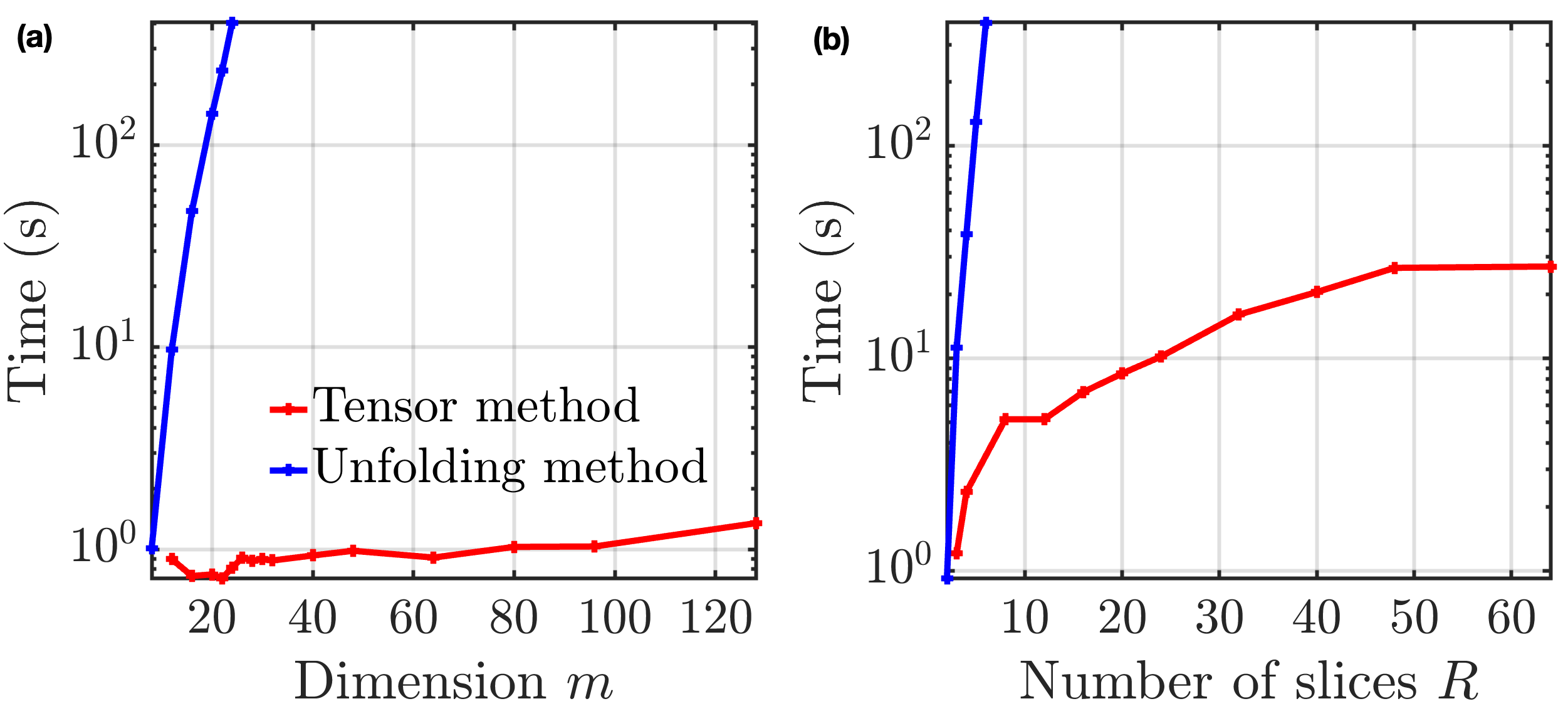}
    \vspace{-15pt}
    \caption{Computational time for stabilization of the tensor
    multi-inertia motor Kronecker-sum system.
    (a) Runtime versus the subsystem dimension.
    (b) Runtime versus the number of transformed slices.}
    \label{fig:motor-stabilization}
    \vspace{-10pt}
\end{figure}

\paragraph{Stabilization.}
We next consider stabilization of the same Kronecker-sum motor system by
introducing a small shift to generate unstable open-loop dynamics.
Both approaches successfully stabilize all tested systems; for example,
at $m=30$ and $R=2$, the spectral abscissa is reduced from
$2.377\times10^{-1}>0$ to $-1<0$ by the tensor-tensor feedback.
As shown in Fig.~\ref{fig:motor-stabilization}, the tensor-tensor method
exhibits substantially better computational scaling. For example, at
$m=20$, it requires about $7.5\times10^{-1}$ s, compared with
$1.43\times10^{2}$ s for unfolding. Similarly, as the number of slices
$R$ increases, the unfolding cost grows rapidly, while the tensor-tensor
method remains feasible for substantially larger systems.

\section{Conclusion}\label{sec:con}
In this article, we studied stability analysis and stabilization of  Kronecker-sum-coupled multilinear systems under the tensor-tensor product framework. 
By exploiting both the subsystems decomposition induced by the Kronecker-sum structure and the slice-wise decoupling property in the transform domain, the proposed formulation reduces large-scale tensor problems to a collection of smaller subsystem and slice-wise matrix problems, enabling efficient eigenvalue computation and LMI-based stabilization analysis. Future work includes extending the proposed framework to nonlinear tensor-based systems.
For such systems, the proposed transform-domain representation may be
used to exploit the slice-wise structure, while the
nonlinearity may introduce coupling among the transformed slices.
Developing stability and stabilization conditions for such
nonlinear cross-slice interactions is an interesting direction for
future research.

\bibliographystyle{IEEEtran}
\bibliography{reference.bib}
\end{document}